\documentclass[12pt, reqno,psamsfonts]{amsart}
\usepackage{amssymb,amsfonts,amsthm,amsmath,epsfig,hhline}
\usepackage{verbatim, enumerate}    
\usepackage{bookmark}
\usepackage[arc,all,graph,frame]{xy}

\setlength{\textwidth}{6.5in}
\setlength{\oddsidemargin}{0.05in}
\setlength{\evensidemargin}{0.05in}
\setlength{\textheight}{8.5in}  \setlength{\hfuzz}{5pt}

\theoremstyle{plain}
\newtheorem{theorem}{Theorem}

\newtheorem{lemma}{Lemma}

\theoremstyle{definition}

\newtheorem{remark}{Remark}

\def\e{\underline{e}}
\def\f{\underline{f}}
\def\E{\underline{E}}
\def\F{\underline{F}}

\def\x{\underline{x}}
\def\y{\underline{y}}

\title[Bipartite graphic pairs]{A sufficient condition for a pair of  sequences to be bipartite graphic}

\author{Grant Cairns}
\author{Stacey Mendan}
\author{Yuri Nikolayevsky}

\address{Department of Mathematics and Statistics, La Trobe University, Melbourne, Australia 3086}
\email{G.Cairns@latrobe.edu.au}
\email{spmendan@students.latrobe.edu.au}
\email{Y.Nikolayevsky@latrobe.edu.au}

\keywords{bipartite graph, degree sequence}
\subjclass[2010]{05C07}

\begin{document}

\begin{abstract}
We present a sufficient condition for a pair of finite integer sequences to be degree sequences of a bipartite graph,  based only on the lengths of the sequences and their largest and smallest elements. \end{abstract}

\maketitle

\section{Introduction}

For natural numbers $a,b,c,d,m,n,S$, let $P(a,b,c,d,m,n,S)$ denote the set of pairs $(\e ,\f)$  of integer sequences of length $m,n$ respectively, each having sum $S$, with $\max(\e)=a, \min(\e)=b, \max(\f)=c, \min(\f)=d$.
We consider the following problem: when is it the case that for all  pairs $(\e ,\f)  \in P(a,b,c,d,m,n,S)$, there exists a  bipartite graph whose degree sequences are $\e$ and $\f$?
In this case the pair $(\e ,\f) $ is said to be \emph{bipartite graphic}.

Before presenting our main result, we remark that for the symmetric case where $\e=\f$, a sufficient condition was given in
\cite{ABK}, and a sharp bound was given in
\cite{CMNABK}. See also \cite{Miller,CM1}. For the analogous problem of the graphicality of a single sequence, a sufficient condition was given in \cite{ZZ}, improvements were given in \cite{BHJW,CMZZ}, and a
sharp bound was given in \cite{CMNZZ} (note that \cite{CMZZ} was written before but appeared after \cite{CMNZZ}).

\begin{theorem}\label{T:main}
For natural numbers $a,b,c,d,m,n,S$ such that $n \geq a \geq  b, m \geq c \geq  d$, and $\max(mb, nd)\leq S\leq \min(ma, nc)$, the following conditions are equivalent.
\begin{enumerate}[{\rm (a)}]
\item All  pairs $(\e ,\f)  \in P(a,b,c,d,m,n,S)$ are bipartite graphic.
\item $a=b$ or $c=d$ or, when $a>b$ and $c>d$,
\begin{equation}\label{ineq}
 ar+cs \leq  S + rs + \min\{r-p-d,s-q-b,r+s-p-q-b-d+1,0\},
 \end{equation}
where
$r=\lfloor\frac{S-mb}{a-b}\rfloor, s=\lfloor\frac{S-nd}{c-d}\rfloor,p=S-cs-d(n-s)$ and $q=S-ar-b(m-r)$.
\end{enumerate}
\end{theorem}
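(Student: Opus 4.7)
The strategy is the standard Gale--Ryser reduction: by the Gale--Ryser theorem, a pair of non-increasing integer sequences $(\e,\f)$ with common sum $S$ is bipartite graphic iff $\sum_{i=1}^{k} e_i \leq \sum_{j=1}^{n}\min(f_j,k)$ for every $1\leq k\leq m$. The plan is to isolate a single distinguished pair $(\e^{\ast},\f^{\ast})\in P(a,b,c,d,m,n,S)$, show that Gale--Ryser for this pair implies it for every member of $P$, and then translate the Gale--Ryser inequalities at its two critical breakpoints into the four terms inside the min in~\eqref{ineq}.

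The distinguished pair is
\[
\e^{\ast}=(\underbrace{a,\dots,a}_{r},\,b+q,\,\underbrace{b,\dots,b}_{m-r-1}),\qquad
\f^{\ast}=(\underbrace{c,\dots,c}_{s},\,d+p,\,\underbrace{d,\dots,d}_{n-s-1}),
\]
which is well defined with $0\leq q<a-b$ and $0\leq p<c-d$ thanks to the hypothesis $\max(mb,nd)\leq S\leq\min(ma,nc)$. A short direct check---at $k\leq r$ using $e_i\leq a$, and at $k\geq r+1$ using $e_i\geq b$ together with the fixed sum $S$---shows that $\e^{\ast}$ majorises every admissible $\e$, and symmetrically for $\f^{\ast}$. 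Since $x\mapsto\min(x,k)$ is concave, Karamata's inequality yields $\sum_j\min(f_j^{\ast},k)\leq\sum_j\min(f_j,k)$ whenever $\f$ is majorised by $\f^{\ast}$; hence Gale--Ryser for $(\e^{\ast},\f^{\ast})$ entails Gale--Ryser for every $(\e,\f)\in P$. Combined with the trivial converse $(\e^{\ast},\f^{\ast})\in P$, this reduces Theorem~\ref{T:main} to a single Gale--Ryser computation. The degenerate cases $a=b$ and $c=d$ are treated separately: one sequence is then constant and bipartite graphicality follows directly from the arithmetic bounds on~$S$.

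For that computation, both $h(k):=\sum_{i=1}^{k}e_i^{\ast}$ and $g(k):=\sum_{j}\min(f_j^{\ast},k)$ are piecewise linear in~$k$, with slope changes at $k=r+1,r+2$ and at $k=d,d+p,c$ respectively. Using $n\geq a$ and $m\geq c$, one verifies that the only breakpoints at which $g-h$ can take its minimum are $k=r$ and $k=r+1$. At $k=r$, Gale--Ryser becomes $ar+cs\leq S+rs$ (the min-term $0$) when $r\geq d+p$, and $ar+cs\leq S+rs+(r-p-d)$ when $r<d+p$. At $k=r+1$, after substituting $q=S-ar-b(m-r)$, the sub-cases $r+1\geq d+p$ and $r+1<d+p$ yield the remaining terms $s-q-b$ and $r+s-p-q-b-d+1$. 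The conjunction of all four sub-cases is precisely~\eqref{ineq}.

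The main technical obstacle is the case analysis needed to confirm that no other breakpoint is binding---one must dispose of $k=d,\,d+p,\,c,\,m$ and ensure the argument remains valid at the limiting regimes $r=m-1$, $p=0$, or $q=0$, where certain of the four min-terms become vacuous.
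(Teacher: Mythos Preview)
Your plan is correct and follows essentially the same route as the paper: reduce to the extremal pair $(\E,\F)=(a^r,b+q,b^{m-r-1}\,;\,c^s,d+p,d^{n-s-1})$ via majorisation (this is the paper's Lemma~\ref{l:EF}), then check Gale--Ryser only at the critical indices $k=r$ and $k=r+1$ and unpack the resulting four sub-cases into the min in~\eqref{ineq}. The only cosmetic differences are that the paper proves the $\f$-side of the reduction by an elementary transfer argument rather than invoking Karamata, and that it cites the Zverovich--Zverovich strong-index refinement of Gale--Ryser to restrict attention to $k\in\{r,r+1,m\}$ instead of analysing the piecewise-linear difference $g-h$ directly.
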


\begin{remark}\label{R:cond}
The hypotheses $a \geq  b, c \geq  d$, and $\max(mb, nd)\leq S\leq \min(ma, nc)$ of the above theorem are just the obvious conditions under which $P(a,b,c,d,m,n,S)$ is nonempty. The hypotheses $n \geq a , m \geq c $ are  obvious necessary conditions for a pair to be bipartite graphic.\end{remark}

\begin{remark}\label{R:Sdep}
The dependence on $S$ of the criteria in the above theorem can be removed by imposing \eqref{ineq} for each of the finite number of possible values of $S$, that is, all $S$ with
$\max(mb, nd)\leq S\leq \min(ma, nc)$.
\end{remark}

The paper is organised as follows. In Section \ref{3step} we prove the key fact that it suffices to consider sequences with at most three different entries. The proof of Theorem \ref{T:main} is completed in Section \ref{pf}. Finally, in Section \ref{ss}, we employ Theorem \ref{T:main} in the case of bipartite graphs whose degree sequences $\e,\f$ are equal; this gives an alternate proof of the main result of \cite{CMNABK}.

\section{Pairs with at most three different entries}\label{3step}

Consider natural numbers $a,b,c,d,m,n,S$ such that $n \geq a > b, m \geq c >  d$, and $\max(mb, nd)\leq S\leq \min(ma, nc)$.
Let $r=\lfloor\frac{S-mb}{a-b}\rfloor, s=\lfloor\frac{S-nd}{c-d}\rfloor,p=S-cs-d(n-s)$ and $q=S-ar-b(m-r)$. Note that $1\leq r< n, 1\leq s<m,0\leq q <a-b$ and $0\leq p <c-d$. Consider the sequences
\begin{equation}\label{eq:EF}
\E =(a^r, b+q,  b^{m-r-1}),  \quad
\F  =(c^s, d+p, d^{n-s-1}).
\end{equation}
Here and throughout this paper, the superscripts indicate the number of repetitions of the entry.
By construction, $\E$ and $\F$ both have sum
$S= ra+b(m-r)+q=c+d(n-s)+p$.
So $(\E,\F)\in P(a,b,c,d,m,n,S)$.
The following lemma shows that the bipartite graphicality need only be checked for such pairs of sequences.

\begin{lemma}\label{l:EF}
The following conditions are equivalent.
\begin{enumerate}[{\rm (a)}]
\item All  pairs $(\e ,\f)  \in P(a,b,c,d,m,n,S)$ are bipartite graphic.
\item  The pair $(\E,\F)$ is bipartite graphic.
\end{enumerate}
\end{lemma}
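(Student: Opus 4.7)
The implication (a)$\Rightarrow$(b) is immediate, since $(\E,\F)\in P(a,b,c,d,m,n,S)$ by construction. For the converse (b)$\Rightarrow$(a), my plan is to combine the Gale--Ryser criterion with a classical majorization argument.

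The key point is that $\E$ and $\F$ are the \emph{majorization maxima} of their respective admissible classes. Concretely, I first verify that for every sequence $\e$ of length $m$ with entries in $[b,a]$ and sum $S$, whose non-increasing rearrangement I denote by $\e^{\downarrow}$, one has $\sum_{i=1}^k E_i\geq \sum_{i=1}^k e_i^{\downarrow}$ for every $k$. The argument splits at $k=r+1$: for $k\leq r$ the bound $\sum_{i=1}^k e_i^{\downarrow}\leq ka$ follows from $e_i\leq a$, while for $k\geq r+1$ the bound $\sum_{i=1}^k e_i^{\downarrow}\leq S-(m-k)b$ follows from $e_i\geq b$. A direct computation using the definition of $q$ shows that $\E$ saturates both of these upper bounds at every $k$ (with the transition occurring at $k=r+1$, where the intermediate entry $b+q$ sits). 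An identical argument shows that $\F$ majorizes every admissible $\f$.

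Next I invoke the Gale--Ryser theorem: an integer pair $(\e,\f)$ with equal sums is bipartite graphic if and only if $\sum_{i=1}^k e_i^{\downarrow}\leq \sum_{j=1}^n \min(f_j,k)$ for every $k$. By hypothesis this inequality holds for the pair $(\E,\F)$. Since $x\mapsto \min(x,k)$ is concave, Karamata's inequality applied to the majorization $\F\succ \f$ yields $\sum_{j=1}^n \min(F_j,k)\leq \sum_{j=1}^n \min(f_j,k)$. Chaining the three inequalities,
\[
\sum_{i=1}^k e_i^{\downarrow}\;\leq\; \sum_{i=1}^k E_i\;\leq\; \sum_{j=1}^n \min(F_j,k)\;\leq\; \sum_{j=1}^n \min(f_j,k),
\]
gives the Gale--Ryser condition for $(\e,\f)$, which is therefore bipartite graphic.

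The only step requiring real verification is the majorization claim, and I expect it to be the main (though mechanical) obstacle; it reduces to checking the identity $S-(m-k)b = ra+(k-r)b+q$ for $k\geq r+1$, together with its analogue for $\F$, both of which follow immediately from the definitions of $q$ and $p$. Everything else is a standard invocation of Gale--Ryser and Karamata.
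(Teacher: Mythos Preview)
Your proof is correct and follows essentially the same architecture as the paper's: reduce (b)$\Rightarrow$(a) to two partial-sum comparisons (that $\E$ majorizes every admissible $\e$, and that $\sum_j\min(F_j,k)\le\sum_j\min(f_j,k)$ for every admissible $\f$), and then chain them through the Gale--Ryser inequality for $(\E,\F)$. The only substantive difference is in how the second comparison is established. The paper handles it by an elementary iterative transfer argument: starting from $\f$, repeatedly increment the entry just past the block of $c$'s and decrement the entry just before the block of $d$'s, checking directly that each such step weakly decreases $\sum_j\min(\cdot,k)$, until one arrives at $\F$. You instead observe that $\F$ majorizes $\f$ (by the same argument as for $\E$) and invoke Karamata for the concave function $t\mapsto\min(t,k)$. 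Your route is cleaner and more conceptual; the paper's is self-contained and avoids appealing to a named inequality. For the first comparison, your two-case split ($k\le r$ via $e_i\le a$; $k\ge r+1$ via $e_i\ge b$ and the identity $\sum_{i=1}^k E_i=S-(m-k)b$) is a slight streamlining of the paper's three cases, which treat $k=r+1$ separately by complement and then run a monotonicity-of-differences argument for $k\ge r+2$.
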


\begin{proof}
(a) $\implies$ (b) is obvious. To prove the converse, recall that by the Gale-Ryser Theorem \cite{Gale,Ryser}, a pair of decreasing integer sequences $\e =(e_1, e_2, \dots, e_{m-1}, e_m), \; \f  =(f_1, f_2, \dots, f_{n-1}, f_n)$ is bipartite graphic if and only if they have the same sum and for all $k=1, \dots, m$, the inequality
\begin{equation}\label{eq:GR}
    \sum_{i=1}^k e_i \le \sum_{i=1}^n \min(k,f_i)
\end{equation}
is satisfied.
(Here, and throughout the paper, \emph{decreasing} is be understood in the non-strict sense). So by the Gale-Ryser Theorem,
(b) $\implies$ (a) follows from the following two claims:
\begin{enumerate}[(i)]
  \item \label{it:1}
  If $\e =(e_1=a, e_2, \dots, e_{m-1}, e_m=b)$ is a decreasing sequence with the sum $S$ and $\E$ is given by \eqref{eq:EF}, then for all $k=1, \dots, m$,
\begin{equation*}
    \sum_{i=1}^k e_i \le \sum_{i=1}^k E_i.
\end{equation*}

  \item \label{it:2}
  If $\f  =(f_1=c, f_2, \dots, f_{n-1}, f_n=d)$ is a decreasing sequence with the sum $S$ and $\F$ is given by \eqref{eq:EF}, then for all $k=1, \dots, m$,
\begin{equation*} 
    \sum_{i=1}^n \min(k,F_i) \le \sum_{i=1}^n \min(k,f_i).
\end{equation*}
\end{enumerate}

To prove \eqref{it:1} we first note that the required inequality is satisfied for all $k=1, \dots, r$, as for such $k$, $e_i \le E_i=a$. For $k=r+1$ we need to show that $\sum_{i=1}^{r+1} e_i \le ar+b+q$, which is equivalent to $\sum_{i=r+2}^{m} e_i \ge S-(ar+b+q)=b(m-r-1)$, which is true as $e_i \ge b$. Now for $k=r+2, \dots, m$ define $\phi_k=\sum_{i=1}^k(E_i-e_i)$. We have $\phi_m=0$. Moreover, $\phi_{k+1}-\phi_k= E_{k+1}-e_{k+1}=b-e_{k+1} \le 0$, so the sequence $\phi_k$ is decreasing. Hence $\phi_k \ge 0$ for all $k=r+2, \dots, m$.

The proof of \eqref{it:2} can be deduced from the symmetry (we can interchange the sequences $\e $ and $\f  $). It is cleaner however to give an independent proof. So suppose that $\f  =(f_1=c,$ $f_2, \dots, f_{n-1}, f_n=d)$ is a decreasing sequence with the sum $S$. Let $C$ be the maximal subscript such that $f_C=c$ and let $D$ be the minimal subscript such that $f_D=d$. Clearly $C<D$. If $C+1=D$ or if $C+2=D$, then $\f  =\F  $ (as the sum is fixed, so that $f_{C+1}$ is uniquely determined). Otherwise consider the sequence $\underline{f'}$ such that $f'_{C+1}=f_{C+1}+1, \, f'_{D-1}=f_{D-1}-1$ and $f'_i=f_i$ for $i \ne C+1, D-1$. The sequence $\underline{f'}$ is decreasing, with the same sum $S$ as that of $\f  $. Furthermore, the sums $\sum_{i=1}^n \min(k,f_i)$ and $\sum_{i=1}^n \min(k,f'_i)$ may only differ in the terms with $i=C+1, D-1$, and an easy check shows that $\sum_{i \in \{C+1,D-1\}}\min(k,f'_i) \le \sum_{i \in \{C+1,D-1\}} \min(k,f_i)$ for all $k=1, \dots, m$, so $\sum_{i=1}^n \min(k,f'_i) \le \sum_{i=1}^n \min(k,f_i)$ for all $k=1, \dots, m$. Repeating this argument we will eventually arrive at $\F$, which proves \eqref{it:2}.
\end{proof}

\section{Proof of Theorem \ref{T:main}}\label{pf}

Recall that using the notion of strong indices, Zverovich and Zverovich gave the following refinement of the Gale-Ryser Theorem.

\begin{theorem}[{\cite[Theorem~8]{ZZ}}]\label{T:ZZl}
Let $\x = (x_1,\dots,x_m)$ and $\y = (y_1, \dots, y_n)$ be decreasing sequences of natural numbers with equal sum $S$, and suppose that $\x$ has the form $\x = (z_1^{l_1},z_2^{l_2},\dots,z_t^{l_t})$, where  $z_1>z_2>\dots> z_t$. The pair $(\x, \y)$ is bipartite graphic if and only if for all  $k \in\{ l_1,l_1+l_2,\dots,l_1+\dots+l_t\}$, one has
\begin{equation}\label{ZZ}
\sum_{i=1}^k x_i \leq \sum_{i=1}^n \min\{k, y_i\}.
\end{equation}
\end{theorem}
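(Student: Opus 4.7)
The plan is to derive Theorem~\ref{T:ZZl} from the classical Gale--Ryser theorem by showing that the Gale--Ryser inequalities at all indices $k\in\{1,\dots,m\}$ already follow from those at the strong indices. One direction is immediate: if $(\x,\y)$ is bipartite graphic then Gale--Ryser gives \eqref{ZZ} at every $k$, and in particular at the strong indices. For the substantive converse, I would write $L(k):=\sum_{i=1}^k x_i$ and $R(k):=\sum_{i=1}^n\min(k,y_i)$ and assume $L(k_j)\le R(k_j)$ at each strong index $k_j=l_1+\cdots+l_j$; the aim is then to deduce $L(k)\le R(k)$ for every $k\in\{1,\dots,m\}$ and invoke Gale--Ryser.

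Two observations drive the argument. First, on each block interval $[k_{j-1},k_j]$ all entries $x_{k_{j-1}+1},\dots,x_{k_j}$ equal $z_j$, so $L$ is an affine function of $k$ on this interval, with slope $z_j$. Second, the forward difference $R(k)-R(k-1)=\bigl|\{i:y_i\ge k\}\bigr|$ is non-increasing in $k$, so $R$ is concave in the discrete sense.

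Combining these, I would fix any $k$ with $k_{j-1}<k<k_j$ and set $\lambda=(k-k_{j-1})/(k_j-k_{j-1})\in(0,1)$. Linearity of $L$ gives $L(k)=(1-\lambda)L(k_{j-1})+\lambda L(k_j)$, while concavity of $R$ yields the chord inequality $R(k)\ge(1-\lambda)R(k_{j-1})+\lambda R(k_j)$. These, together with the strong-index hypotheses $L(k_{j-1})\le R(k_{j-1})$ and $L(k_j)\le R(k_j)$, deliver $L(k)\le R(k)$, which is \eqref{ZZ} at $k$. When $k$ is itself a strong index, \eqref{ZZ} holds by assumption, so \eqref{ZZ} holds for every $k\in\{1,\dots,m\}$, and Gale--Ryser closes the argument.

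The only step needing genuine justification is the discrete chord inequality for the concave function $R$, and the plan will prove it by a short averaging argument: since the sequence $R(i)-R(i-1)$ is non-increasing, its average over $i\in\{k_{j-1}+1,\dots,k\}$ is at least its average over $i\in\{k+1,\dots,k_j\}$, which after clearing denominators is exactly the desired chord inequality for $R$. This is really the only place where the hypothesis on $\x$ having constant blocks is used, and it is the lone technical ingredient beyond Gale--Ryser itself.
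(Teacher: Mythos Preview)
The paper does not supply its own proof of this theorem: it is quoted verbatim as \cite[Theorem~8]{ZZ} and used as a black box in the proof of Theorem~\ref{T:main}. So there is nothing in the paper to compare your argument against.

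That said, your argument is correct and is the standard way to recover this refinement from the Gale--Ryser theorem. The two key facts you isolate are exactly the right ones: $L$ is piecewise affine with breakpoints at the strong indices, and $R$ is discretely concave because $R(k)-R(k-1)=|\{i:y_i\ge k\}|$ is non-increasing. The chord inequality then forces $L(k)\le R(k)$ on each block from its values at the endpoints. One small point worth making explicit: for the first block you are implicitly using $k_0=0$ and $L(0)=R(0)=0$, which holds since the $y_i$ are natural numbers; you should state this so the induction base is visible. With that one-line addition the proof is complete.
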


\begin{remark}\label{end}
Let $m=l_1+\dots+l_t$. For $k=m$ the inequality \eqref{ZZ}  is just $S\leq \sum_{i=1}^n \min\{m, y_i\}$. Notice that this inequality holds if and only if  $y_1\leq m$, because of the assumption that the sequences each have sum $S$.
\end{remark}

\begin{proof}[Proof of Theorem \ref{T:main}]
Let $a,b,c,d,m,n,S$ be as in the statement of Theorem \ref{T:main}.
First we treat the case where $a=b$ or $c=d$. Without loss of generality, suppose that $a=b$. So if $(\e,\f)\in P(a,b,c,d,m,n,S)$, then $\e=(a^m)$. By Theorem \ref{T:ZZl} with $\x=\e,\y=\f$, the pair $(\e,\f)$ is
bipartite graphic if \eqref{ZZ} holds for $k=m$, which is the case by Remark \ref{end} since $d\leq m$ by hypothesis.
So we may assume that $a>b$ and $c>d$.

Applying Theorem \ref{T:ZZl} and Remark \ref{end} to the pair $(\E,\F)$ of Section \ref{3step}, we have that $(\E,\F)$ is bipartite graphic if and only if the following two inequalities hold:
\begin{align}
ar &\leq \sum_{i=1}^n \min\{r,F_i\}, \label{IE:r} \\
ar+b+q &\leq \sum_{i=1}^n \min\{r+1,F_i\}. \label{IE:r+1}
\end{align}
When $r<d$, since $n\geq a$ we have $ \sum_{i=1}^n \min\{r,F_i\}=nr \geq ar$ and
\[
  \sum_{i=1}^n \min\{r+1,F_i\}=n(r+1) \geq  ar+a \geq ar+b+q,
\]
so \eqref{IE:r}  and \eqref{IE:r+1} both hold. Similarly, if  $c\leq r$,
then $ \sum_{i=1}^n \min\{r,F_i\}= \sum_{i=1}^n F_i =S \geq ar$
 and
 \[
  \sum_{i=1}^n \min\{r+1,F_i\}=\sum_{i=1}^n F_i =S \geq  ar+b+q,
\]
so \eqref{IE:r}  and \eqref{IE:r+1} again both hold. Thus we may assume that $d\leq r <c$. Hence
\begin{align}
\sum_{i=1}^n \min\{r,F_i\}&=rs+\min\{r,d+p\}+d(n-s-1), \label{IE:rsimp}\\
\sum_{i=1}^n \min\{r+1,F_i\}&=rs+s+\min\{r+1,d+p\}+d(n-s-1).\label{IE:r+1simp}
\end{align}
Consequently \eqref{IE:r}  and \eqref{IE:r+1} both hold, and hence  $(\E,\F)$ is bipartite graphic, if and only if
\begin{align*}
ar&\leq \min\bigg\{\sum_{i=1}^n \min\{r,F_i\}, \sum_{i=1}^n \min\{r+1,F_i\}-b-q\bigg\}\\
&= rs+d(n-s-1) + \min\{\min\{r,d+p\}, \min\{r+1,d+p\}+s -b-q\}.
\end{align*}
Substituting $d(n-s)=S-cs-p$ gives a more symmetrical, equivalent condition:
\begin{align*}
ar+cs &\leq S + rs -d -p+\min\{\min\{r,d+p\},\min\{r+1,d+p\}+s-b-q\}\\
&= S + rs + \min\{\min\{r-p-d,0\},\min\{r+s-b-d-p-q+1,s-b-q\}\}\\
&= S + rs + \min\{r-p-d,s-q-b,r-p-d+s-q-b+1,0\}.
\end{align*}
So Theorem \ref{T:main} follows from Lemma \ref{l:EF}.
\end{proof}

\section{Symmetric pairs}\label{ss}

In \cite{CMNABK}, a sharp sufficient condition was given for a symmetric pair $(\e,\e)$ to be bipartite graphic; if $\e$ has length $m$, maximal element $a$, and minimal element $b$, then the condition is $mb \geq \lfloor \frac{(a+b)^2}4 \rfloor$.
Notice that when $a+b$ is odd, the condition is  $mb \geq  \frac{(a+b)^2-1}4$, or equivalently $4mb \geq (a+b)^2-1$.
When $a+b$ is even, the condition is  $mb \geq  \frac{(a+b)^2}4$, or equivalently $4mb \geq (a+b)^2$. But in this case, since both sides are divisible by 4, this condition can also be written as  $4mb \geq (a+b)^2-1$. So we may reformulate the main result of \cite{CMNABK} as follows.

\begin{theorem}\label{ee}
Consider natural numbers $a,b,m$ such that $m \geq a \geq  b$,  and $4mb \geq (a+b)^2-1$.
Then for all $S$ with $mb\leq S\leq ma$, all symmetric pairs $(\e ,\e)  \in P(a,b,a,b,m,m,S)$ are bipartite graphic.
\end{theorem}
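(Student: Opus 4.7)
The plan is to apply Theorem~\ref{T:main} in the symmetric setting $c=a$, $d=b$, $n=m$. By symmetry of the defining formulas, $s=r$ and $p=q$. If $a=b$, condition (b) of Theorem~\ref{T:main} is trivially satisfied; otherwise $a>b$ and inequality~\eqref{ineq} collapses to
\[
2ar \leq S + r^2 + \min\{r-q-b,\ 2(r-q-b)+1,\ 0\}.
\]
A quick case split on whether $t := r-q-b$ is nonnegative, equals $-1$, or is at most $-2$ shows that $\min\{t,2t+1,0\} = \min\{0,2t+1\}$ for every integer $t$. Substituting $S = mb + r(a-b) + q$ and rearranging then reduces the condition, for each admissible $S$, to exactly one of the two polynomial inequalities
\[
r(a+b-r) \leq mb+q \quad (\text{when } r \geq q+b)
\]
and
\[
r(a+b-r-2) \leq mb-q-2b+1 \quad (\text{when } r \leq q+b-1).
\]

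For the first inequality, the tightest choice of $q$ is $q=0$, so it suffices to show $r(a+b-r) \leq mb$ for every integer $r$. The maximum of $r(a+b-r)$ over $r \in \mathbb{Z}$ equals $\lfloor (a+b)^2/4 \rfloor$. When $a+b$ is odd this equals $((a+b)^2-1)/4$ and the required bound is exactly the hypothesis; when $a+b$ is even it equals $(a+b)^2/4$, and since both $4mb$ and $(a+b)^2$ are divisible by $4$ while $(a+b)^2-1$ is not, the hypothesis $4mb \geq (a+b)^2-1$ automatically upgrades to $4mb \geq (a+b)^2$.

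For the second inequality, the tightest choice of $q$ is $q=a-b-1$, reducing it to $r(a+b-r-2) \leq mb-a-b+2$. Since the integer maximum of the left-hand side is at most $(a+b-2)^2/4$, this follows once $4mb \geq (a+b)^2-4$, which is strictly weaker than the hypothesis.

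The genuine obstacle lives in the first inequality: the bound $4mb \geq (a+b)^2 - 1$ is tight precisely at the extremal $(r,q) = (\lfloor (a+b)/2 \rfloor, 0)$ with $a+b$ odd, and the even-parity case has to be handled by the small divisibility argument above. Everything else is routine polynomial bookkeeping once the $\min$ is simplified.
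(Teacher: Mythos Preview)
Your proposal is correct and follows essentially the same route as the paper: both apply Theorem~\ref{T:main} in the symmetric case, reduce the four-term minimum to $\min\{2(r-q-b)+1,0\}$, split on the sign of that expression, take the extremal value of $q$ in each branch, and bound the resulting quadratic in $r$ (you via the integer maximum $\lfloor (a+b)^2/4\rfloor$, the paper via the discriminant), with the same parity/divisibility observation handling the tight case. The only notable difference is cosmetic: you justify the simplification of the $\min$ explicitly, whereas the paper states it without comment.
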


We now employ Theorem \ref{T:main} to give an alternate proof of Theorem \ref{ee}.

\begin{proof}[Proof of Theorem \ref{ee}]
Suppose that $m \geq a \geq  b$,  and $4mb \geq (a+b)^2-1$. First note that the required result holds if $a=b$ by Theorem \ref{T:main}. So we may assume that $a\geq b+1$. Substituting $c=a,d=b$ and $n=m$ in Theorem \ref{T:main}(b) we have that if $mb\leq S\leq ma$, then all symmetric pairs $(\e ,\e)  \in P(a,b,a,b,m,m,S)$ are bipartite graphic if
\begin{equation}\label{sineq}
 2ar \leq  S + r^2 + \min\{2r-2q-2b+1,0\},
 \end{equation}
where
$r=\lfloor\frac{S-mb}{a-b}\rfloor$ and $q=S-ar-b(m-r)$.
Using $S= ar+b(m-r )+q$, and rearranging, \eqref{sineq} can be written as $R\geq 0$, where
\begin{equation}\label{sineq2}
R=  r^2 - (a+b)r+mb+q+\min\{2r-2q-2b+1,0\}.
 \end{equation}
So by Theorem \ref{T:main}, it remains to use $4mb \geq (a+b)^2-1$ to show $R\geq 0$ holds for all $1\leq r<m$ and $0\leq q<a-b$.

If $2r-2q-2b+1\leq 0$,  then $R=r^2 - (a+b-2)r+b(m-2)-q+1$, and it clearly suffices to consider the case $q=a-b-1$. In this case,
$R=r^2 - (a+b-2)r+bm-a-b+2$, which we regard as a quadratic in $r$. The discriminant $\Delta$ is
$(a+b-2)^2-4(bm-a-b+2)=(a+b)^2-4(bm+1)$. So as $4mb \geq (a+b)^2-1$,
we have $\Delta\leq  -3< 0$.
Hence $R\geq 0$ for all $r$, in this case.

If $2r-2q-2b+1> 0$, then $R=r^2 - (a+b)r+mb+q$, and it clearly suffices to consider the case $q=0$. The discriminant  $\Delta$  is then
$(a+b)^2-4mb$. So as $4mb \geq (a+b)^2-1$,
\[
\Delta= (a+b)^2-4mb \leq 1.
\]
If $\Delta\leq 0$, then $R\geq0$ for all $r$, as required. If $\Delta=1$, then $a+b$ is necessarily odd. Thus, as the minimum of the quadratic $r^2 - (a+b)r+mb$ is attained at $\frac{a+b}2$, the smallest value of $R$ for $r$ an integer is attained at $\frac{a+b\pm 1}2$; but these are the zeros of $R$. So $R\geq 0$ for all integers $r$, as required.
\end{proof}

\bibliographystyle{amsplain}
\bibliography{final}
\end{document}